\numberwithin{equation}{section}
\newcommand\sC{{\mathcal C}}
\newcommand\sW{{\mathcal W}}
\newcommand\sA{{\mathcal A}}
\newcommand\sL{{\mathcal L}}
\newcommand\sZ{{\mathcal Z}}
\newcommand\sB{{\mathcal B}}
\newcommand\sY{{\mathcal Y}}
\newcommand\pa{\partial}
\newcommand\s{\sigma}
\newcommand\Si{\Sigma}
\newcommand\fie{\varphi}
\DeclareMathOperator{\Pic}{Pic}
\newcommand{\CC}{\ensuremath{\mathbb{C}}}
\newcommand{\hol}{\ensuremath{\mathcal{O}}}
\newcommand{\PP}{\ensuremath{\mathbb{P}}}
\newcommand{\ra}{\ensuremath{\rightarrow}}
\def\eea{\end{eqnarray*}}
\def\bea{\begin{eqnarray*}}
\newcommand{\Proof}{{\it Proof. }}
\newcommand\dual{\mathrel{\raise3pt\hbox{$\underline{\mathrm{\thinspace d
\thinspace}}$}}}
\newcommand\qe{\ifhmode\unskip\nobreak\fi\quad $\Box$}       
\def\BOX{\hfill\lower.5\baselineskip\hbox{$\Box$}}
\newcommand{\sR}{\ensuremath{\mathcal{R}}}
\newtheorem{theorem}{Theorem}[section]
\newtheorem{lemma}[theorem]{Lemma}
\newtheorem{corollary}[theorem]{Corollary}
\newtheorem{proposition}[theorem]{Proposition}
\newtheorem{remark}[theorem]{Remark}
\newtheorem{definition}[theorem]{Definition}
\newtheorem{example}[theorem]{Example}
\title [Deformation to hypersurface embeddding]{Deformation of a generically finite map to a hypersurface  embedding}
\author{Fabrizio Catanese and Yongnam Lee}
\address {Lehrstuhl Mathematik VIII\\
Mathematisches Institut der Universit\"at Bayreuth\\
NW II,  Universit\"atsstr. 30\\
95447 Bayreuth, Germany}
\email{fabrizio.catanese@uni-bayreuth.de}
\address {Department of Mathematical Sciences\\
KAIST\\
291, Daehak-ro, Yuseong-gu\\ 
Daejeon, 34141, Korea}
\email{ynlee@kaist.ac.kr}
\thanks{AMS Classification: 14D15, 14J15, 14J70, 14K12, 32G05, 32G10, 32H02.\\ 
Key words: Deformation theory, hypersurfaces, embeddings, iterated univariate coverings, Inoue-type varieties.\\
The present work took place in the framework  of the 
 ERC Advanced grant n. 340258, `TADMICAMT'. Both authors would also like to 
 acknowledge the support and hospitality
 of KIAS, Seoul which they visited as Research Scholar, respectively as Affiliate Professor. The second author was also supported by Basic Science Research Program through the NRF of Korea (2016930170).}
\date{\today}
\begin{document}

\begin{abstract}
We give a structure theorem for projective manifolds $W_0$ with the property of admitting a 1-parameter deformation
where $W_t$ is a hypersurface in a projective smooth manifold $Z_t$.

Their structure is the one of special iterated univariate coverings which we call of normal type,
which essentially means that the line bundles where the univariate coverings live are tensor powers
of the normal bundle to the image $X$ of $W_0$.

We give applications to the case where $Z_t$ is projective space, respectively an Abelian variety.
\end{abstract}
\maketitle

{\em   Dedicated to Edoardo Sernesi  on the occasion of his 70th birthday}



\section {Introduction}

 Many years ago  Sernesi \cite{smalldef} showed that small deformations of complete intersections in projective space, of dimension $n \geq 2$ (the case of curves,
 $n=1$ is of quite different nature, see e.g. \cite{curvsurf}), are again
 complete intersections, unless the complete intersection defines a K3 surface (i.e.,  $n=2$ and the canonical bundle is trivial).
 Hence, in particular, smooth hypersurfaces in projective space $\PP^{n+1}$ form an open set in the Kuranishi
 space, respectively  an open set in the
 moduli space when they are of general type, unless $n=2$ and the degree equals $4$.
 In considering the closure of this set in the moduli space, we have to  deal  with varieties $W_0$ of the same dimension,  given together with a 
  generically finite  rational map $ \phi_0: W_0 \dasharrow \PP^{n+1}$.
 
 As shown by Horikawa in \cite{quintics}, already in the easiest nontrivial case $n=2$, $deg (W_0) =5$ the situation becomes rather complicated. But we show here
 that things are  simpler  in the case where $\phi_0$ is a morphism.

 A similar result to Sernesi's  holds for hypersurfaces  in an Abelian variety (Kodaira and Spencer's theorem 14.4 in \cite{KSII}), and we can consider the closure
 of the locus of hypersurfaces $X$ in Abelian varieties (for $n \geq 2$  the Abelian variety is just the Albanese variety of $X$) observing that  in this case  any limit $W_0$   has a generically finite Albanese map  $\phi_0 : W_0 \ra A_0$ (see for instance lemma 149 of \cite{C-top}).
 
 Also in this case we can ask the question of characterizing the morphisms  $\phi_0$ admitting a deformation which is a hypersurface embedding
 in some Abelian variety, deformation of the original one.

The main  motivation for posing  this question also in higher generality comes from the theory of  topological methods to moduli theory,
cf. \cite{C-top}; and, more specifically,  the theory of Inoue-type varieties, introduced in \cite{BC-IT}. 
In the theory of Inoue-type varieties, one can describe their moduli spaces explicitly  in the case where 
the morphism  $\phi_0$ has necessarily degree one onto its image. This is however a big restriction, and one would like to
consider also the case where the morphism  $\phi_0$ has degree at least two. We strive therefore
towards   a theory of multiple Inoue-type varieties and, in order to do this,
 we restrict ourselves in this paper to the special case where $\phi_0$ is a morphism which is generically finite onto its image, and
 where  the canonical divisor of  $W_0$ is ample.

 To  illustrate our main theorem, let us consider two simple examples,   the first one where the image of $W_0$ is the smooth
  hypersurface $X : = \{ \s = 0\} \subset \PP^{n+1}$,
 $\s$ being a homogeneous polynomial of degree $d$. We let  then $W_0$ be the complete intersection in the weighted projective space
 $\PP (1,1, \dots, 1, d)$ defined by the equations
 \begin{multline}  W_0 = \{ (z_0, z_1, \dots, z_{n+1}, w)| \s(z_0, z_1, \dots, z_{n+1}) = 0, \\ 
 P (z_0, z_1, \dots, z_{n+1}, w): = w^m + \sum_{i=1}^m w^{m-i} a_i(z_0, z_1, \dots, z_{n+1})=0\}.\end{multline}

 We can easily deform the complete intersection by deforming the degree $d$ equation adding a constant times the variable $w$,
 hence obtaining the  following complete intersection:

$$P (z_0, z_1, \dots, z_{n+1}, w)= 0,  t w -  \s(z_0, z_1, \dots, z_{n+1}) = 0 , t \in \CC.$$

Clearly, for $t=0$ we obtain the previous $W_0$, a degree $m$ covering of the hypersurface $X = \{ \s = 0 \}$, whereas for $t\neq0$ we can eliminate
the variable $w$ and obtain a hypersurface $W_t$ in $ \PP^{n+1}$ with equation (of degree $md$)

$$P (z_0, z_1, \dots, z_{n+1}, \s (z)/t)= 0.$$

\begin{example} {\bf (Iterated weighted  deformations)}.\label{weighted}

Now, one can iterate this process, and consider, in the weighted projective space
$$\PP (1,1, \dots, 1, d, dm_1, \dots, d m_k), \ \ m_1 | m_2 | \dots | m_k,$$
a complete intersection $W$ of multidegrees $( d, dm_1, \dots, d m_k, dm)$,
 
where $ m_k | m = : m_{k+1}$.

Then, necessarily, there exist constants $t_0, t_1, \dots, t_k$ such that the equations of $W$ have the following form,
where the $Q_j$'s are general weighted homogeneous polynomials of degree $ = d m_j$ (in particular we assume them
to be monic, so that the rational map to projective space is a morphism):

\begin{equation}\label{steq}
\begin{cases}
\s(z )=w_0t_0\\
Q_1(w_0, z)=w_1t_1 \\
\cdots\, \,\,\,\, \cdots \\
Q_{k}(w_0, \ldots, w_{k-1},z )=w_{k}t_k \\
Q_{k+1} (w_0, \ldots, w_k, z)=0.
\end{cases}
\end{equation}

Again, if all the $t_j$'s are $\neq 0$, we can eliminate the variables $w_j$, and we obtain a hypersurface
$ \{ F(z)=0 \}$ in $\PP^{n+1}$.

\end{example}

We claim that the above description generalizes, and
  the main idea  of the following main theorem is that one can replace weighted projective space
$\PP (1,1, \dots, 1, d, dm_1, \dots, d m_k), \ \ m_1 | m_2 | \dots | m_k,$ by the total space of a
direct sum of  line bundles over some  projective variety $X$,  or $Z_0 \supset X$, or over a family $\sZ$ 
of projective varieties.

The first assertion of the main theorem is that, in order that $\phi_0 : W_0 \ra Z_0$ deforms 
to a hypersurface embedding, a necessary condition is that $\phi_0 : W_0 \ra X : = \phi_0(W_0)$
is a smooth  iterated univariate covering of normal type (see the next section for this very restrictive condition).

The main theorem also gives sufficient conditions, given such a covering, for the existence 
of a deformation to a hypersurface embedding.

We then give the proof, and in the final section, we discuss the first applications to the case where the
target manifold $Z_0$ is projective space or an Abelian variety.

We defer the applications to the theory of multiple Inoue type varieties to a future paper.

We work of course over the complex numbers, and in several situations we consider also 
more general compact complex manifolds than projective manifolds.

\section{Statement of the main theorem}

 To give a clear statement of our results, we need to introduce the following terminology.

\begin{definition}
\hfill\break
i) Given a complex space (or a scheme) $X$, a {\bf univariate covering} of $X$ is a hypersurface $Y$, contained in a line bundle
over $X$, and defined there as the zero set of a monic polynomial. 

This means, $ Y = \underline{Spec} (\sR)$, where $\sR$ is the quotient algebra of the symmetric algebra 
over an invertible sheaf $\sL$, $ Sym (\sL) = \oplus_{i \geq 0} \sL^{\otimes i}  $, by a monic (univariate) polynomial:
$$ \sR : = Sym (\sL) / (P) , P = w^m+ a_1(x) w^{m-1} +a_2(x) w^{m-2}+\cdots +a_m(x).$$
Here $a_j \in H^0 (X, \sL^{\otimes j})$.
 The univariate covering is said to be {\bf smooth} if both $X$ and $Y$ are smooth.

ii) An {\bf   iterated univariate covering} $ W \ra X$ is a composition of  univariate coverings
$$f_{k+1} : W \ra X_{k}, f_k : X_k \ra X_{k-1}, \dots , f_1 : X_1 \ra X ,$$ whose associated line bundles are
denoted $\sL_k,  \sL_{k-1},  \dots, \sL_1,   \sL_0 $.

iii) In the case where $ X \subset Z$ is a (smooth) hypersurface, we say that the iterated univariate covering
is of {\bf normal type} if 

\begin{itemize}
\item
all the line bundles $\sL_j$ are pull back  from $X$ of a line bundle
of the form $\hol_X ( m_j X)$, and moreover
\item
$ m_1 | m_2 | \dots | m_k,$ and the degree of $f_j$ equals  $\frac{m_j}{m_{j-1}}$.
\item
we say that the iterated covering is {\bf normally induced} if moreover 
  all the coefficients $a_I(x)$ of the polynomials
  
$$ Q_{j}(w_0, \ldots, w_{j-1},x)= \sum_I  a_I(x) w^I $$

describing the intermediate extensions are sections of a line bundle $\hol_X(r(I)X)$ coming from
$H^0 (Z, \hol_Z(r(I)X))$.
\end{itemize}
\end{definition}  

\begin{remark}
The property that the iterated univariate covering $ W \ra X$ is normally induced clearly means that
it is the restriction to $X$ of an iterated univariate covering of $Z$.

The property that the former is smooth does not necessarily imply that also the latter is smooth.
\end{remark}

\begin{definition}
{\bf A 1-parameter deformation to hypersurface embedding} consists of the following data:

\begin{enumerate}
\item
 a one dimensional family of smooth projective varieties of dimension $n$ (i.e., a smooth
 projective holomorphic map $p :  \sW \ra T$ where $T$ is   a germ  of a smooth
holomorphic  curve at a point $0 \in T$)
mapping to another  family $\pi: \sZ  \ra T$ of smooth projective varieties of dimension $n+1$ via  a relative map $\Phi : \sW \ra \sZ$  such that $\pi \circ \Phi = p$ (hence we have the following commutative diagram)
\[\xymatrix@C-1pc{
  \sW \ar[rr]^{\Phi} \ar[dr]_{p} && \sZ \ar[dl]^{\pi} \\
  & T,
}\]
such that moreover 
\item

for $t\ne 0$ in $T$, $\Phi_t$ is an embedding of $W_t : = p^{-1}(t)$ in $Z_t$,
\item the restriction of the map $\Phi$ on $W_{0}$ is a generically finite morphism of degree $m$,   so that the image of $\Phi|_{W_{0}}$ is the cycle $\Si_{0}:=mX$ where  $X$ is a reduced  hypersurface in $Z_{0}$, defined by an equation
$X =\{\s=0\}$.
\end{enumerate}

\end{definition}

Put in  concrete terms, one can  take a local coordinate $t$ for $T$ at $0$, and
write,  locally around  $ \{ t=0\}$  the equation of the image $\Sigma : = \Phi (\sW)$ in $\sZ$  via  the Taylor series development in $t$, in terms of  local coordinates $z=(z_1, \ldots, z_n,   z_{n+1})$ on $Z_{0}$,
\[ \Si (z,t) : \s(z)^m+t\s_1(z)+t^2\s_2(z)+\cdots +t^{m-1}\s_{m-1}(z)+\ldots=0.\]

$\sW$ is a resolution of $\Si$ and the next theorems indicates exactly the sequence of blow-ups
needed in order to obtain the resolution $\sW$.

\begin{theorem} \label{MT} (A) Suppose we have a 1-parameter deformation to hypersurface embedding \[\xymatrix@C-1pc{
  \sW \ar[rr]^{\Phi} \ar[dr]_{p} && \sZ \ar[dl]^{\pi} \\
  & T.
}\]
  
and assume  that $K_{W_{0}}$ is ample.

Then we have:

(A1) $X$ is smooth,

(A2)  There are line bundles $\sL_0, \ldots, \sL_k$ on $\sZ$, such that  $\sL_j | _{Z_{0}} = \hol_{Z_{0}} ( m_j X)$ for $j=0, \ldots, k$,  with
  $ 1 = m_0 | m_1 | m_2  \dots | m_k | m_{k+1} := m $  (here $m$ is the degree of the morphism $\Phi_0 : W_0 \ra X$),    and such that 
  $W_{0}$ is  a complete intersection in $\sL_0\oplus\cdots\oplus\sL_{k}| _{ Z_{0} }$, with $\Phi_0$
  a normally induced   iterated  smooth univariate covering .

(A3)  $\sW$ is obtained  from  $\Sigma : = \Phi (\sW)$  by a finite sequence of blow-ups.

 Moreover  the local equations of $\sW$ are  of the  following standard form 
\begin{equation}\label{steq2}
\begin{cases}
\s(z)=w_0t\\
Q_1(w_0, z)=w_1t \\
\cdots\, \,\,\,\, \cdots \\
Q_{k}(w_0, \ldots, w_{k-1},z)=w_{k}t\\
Q_{k+1}(w_0, \ldots, w_k, z, t)=0.
\end{cases}
\end{equation} 
(B1) Conversely, take any  smooth iterated univariate covering of normal type   
$$\phi_0 : W_{0} \ra  X \subset Z_0$$
and take any 1-parameter family $\sZ$ of deformations of $Z_{0}$.

 Then the  line bundle $ \hol_{Z_{0}} ( X )$ extends to a line bundle
$\sL_0 $ on the whole family $\sZ$.   
And $W_{0}$ deforms to a hypersurface embedding
if, for all  $ i \geq 2$,  every section in $H^0 (Z_{0}, \hol_{Z_{0}} (iX))$ and every section in 
$H^0 (X, \hol_{X} (iX))$  comes from a section  in $H^0 (\sZ,\sL_0^{\otimes i})$.

 (B2) This holds in particular, when  the family $\sZ$ is trivial, $\sZ \cong Z_{0} \times T$, 
  if the necessary condition of being normally induced is fulfilled.

\end{theorem}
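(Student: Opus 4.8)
The plan is to reduce the whole statement to the local study of the image hypersurface $\Sigma=\Phi(\sW)\subset\sZ$, whose equation near the central fibre has the Taylor form $F(z,t)=\sigma(z)^m+t\sigma_1(z)+t^2\sigma_2(z)+\cdots$, together with the birational resolution $\sW\to\Sigma$. The point is that the central fibre of $\Sigma\to T$ is the non-reduced cycle $mX$, so the essential phenomenon is that the resolution has to turn this multiplicity-$m$ fibre into a reduced smooth $W_0$ carrying a degree-$m$ morphism to $X$; the content of the theorem is that this resolution is an iterated univariate covering and that the ampleness of $K_{W_0}$ forces it to be of normal type and normally induced.

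For part (A) I would begin with the blow-up of $\sZ$ along the codimension-two locus $\{\sigma=0\}\cap\{t=0\}$, which in one chart is exactly the relation $\sigma=w_0t$, the first of the equations \eqref{steq2}. Substituting $\sigma=w_0t$ into $F$ and demanding that the strict transform still carry a reduced central fibre dominating $X$ forces $F$ to be divisible by a power of $t$ and constrains its Taylor coefficients; extracting the common power of $t$ yields a monic univariate relation in $w_0$. If the resulting covering of $X$ is already smooth the process stops with $k=0$; otherwise its singularities dictate further blow-ups, each introducing a new fibre coordinate $w_j$ through a relation $Q_j=w_jt$ and a new monic polynomial, reproducing the shape of \eqref{steq2}. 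The orders of vanishing of $F$ along the successive exceptional divisors produce the divisibility chain $1=m_0\mid m_1\mid\cdots\mid m_k\mid m$ and the identifications $\sL_j|_{Z_0}=\hol_{Z_0}(m_jX)$. Since $F$ is a global section of $\sL_0^{\otimes m}$ on $\sZ$, its Taylor coefficients in $t$, and hence the coefficients of the $Q_j$, are restrictions of sections living on $Z_0$, which is precisely the statement that the covering is normally induced, while the $\sL_j=\sL_0^{\otimes m_j}$ are powers of $\sL_0$ and so manifestly extend over the whole family.

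The hard part of (A) is the smoothness and minimality assertions, namely (A1) and the clause in (A3) that \emph{exactly} this blow-up sequence occurs: one must exclude superfluous exceptional divisors and rule out a genuinely singular $X$. Here the hypothesis that $K_{W_0}$ is ample is decisive. Using the canonical bundle formula for an iterated univariate covering of normal type one writes $K_{W_0}=\Phi_0^{*}D$ for a divisor $D$ on $X$, so any curve contracted by $\Phi_0$ would have zero intersection with $K_{W_0}$; ampleness then forbids such curves, forcing $\Phi_0$ to be finite, pinning the resolution down to the minimal one, and forcing the centre $X\times\{0\}$ of the first blow-up --- hence $X$ itself --- to be smooth. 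Making this rigorous, and controlling the Newton-polygon bookkeeping of the successive multiplicities, is where I expect the real work to lie.

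For the converse (B1) the construction is explicit and the section-lifting hypothesis is exactly what makes it run. The class $\hol_{Z_0}(X)$ deforms over the disc $T$ to a line bundle $\sL_0$ on $\sZ$ (there is no obstruction, as $T$ retracts to $0$), and the intermediate bundles are the powers $\sL_0^{\otimes m_j}$. The defining polynomials $Q_j$ of the given covering have coefficients that are sections of $\hol_{Z_0}(iX)$ and of $\hol_X(iX)$; the assumption that for every $i\ge2$ these come from $H^0(\sZ,\sL_0^{\otimes i})$ allows one to spread the $Q_j$ out over the whole family and to write the relative equations \eqref{steq2} with $t$ the genuine parameter. For $t\ne0$ one solves recursively $w_0=\sigma/t$, $w_1=Q_1(w_0,z)/t,\ \dots$, eliminating every $w_j$ and realizing $W_t$ as a single hypersurface $\{F(z,t)=0\}$ inside $Z_t$, with $\Phi_t$ an embedding because the elimination identifies $W_t$ with its image; smoothness and flatness of $\sW$ over $T$ near $W_0$ follow from the smoothness of $W_0$ and the Jacobian criterion, using the monicity of the $Q_j$, and this is the only genuine check needed in (B). Part (B2) is then the special case $\sZ\cong Z_0\times T$: here $H^0(\sZ,\sL_0^{\otimes i})$ contains $H^0(Z_0,\hol_{Z_0}(iX))$ through sections constant in $t$, so sections on $Z_0$ lift for free, whereas a section of $\hol_X(iX)$ lifts precisely when it is the restriction of a section on $Z_0$ --- which is exactly what being normally induced guarantees, so the hypothesis of (B1) is satisfied and the deformation exists.
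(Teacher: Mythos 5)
Your outline reproduces the constructive skeleton of the paper's argument --- the substitution $\s = t w_0$ (your blow-up along $\{\s=0\}\cap\{t=0\}$), the iteration of monic relations $Q_j = w_j t$, the observation that globality of the equation of $\Sigma$ yields the normally induced property, and the explicit spreading-out in (B) --- but the places you yourself flag as ``the real work'' are exactly where your substitute arguments break down, and they are the mathematical core of part (A). Your route to (A1) is a non sequitur: you invoke the canonical bundle formula ``for an iterated univariate covering of normal type'' before that structure has been established (circular), and even granting that ampleness of $K_{W_0}$ forces $\Phi_0$ to be finite, finiteness of $\Phi_0$ does not force $X$ to be smooth --- finite surjections from smooth varieties onto singular hypersurfaces abound (e.g.\ any smooth normalization). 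Smoothness of $X$ genuinely uses the hypothesis that $\Phi_t$ is an \emph{embedding} for $t\neq 0$, and the paper extracts it in two ways you do not have: topologically, Lemma~\ref{local} compares the link of a singular branch of the image (an iterated torus knot, by Zariski's theorem) with the link cut out by the nearby embedded smooth $W_t$ (an unknot), Lemma~\ref{coincide} turns a nonzero local intersection (linking) number of two image branches into actual intersection points of $W_t$ for $t\neq 0$, contradicting injectivity, and Proposition~\ref{normal} reduces the general case to this by a rank stratification; algebraically, smoothness of $X$ finally drops out of the triangular Jacobian of the equations \ref{steq2} at $t=0$ --- but only \emph{after} $\sW$ has been identified with the complete intersection model.

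That identification --- your ``pinning the resolution down to the minimal one'' --- is done in the paper by Lemma~\ref{smooth}, applied not to $\Phi_0: W_0 \ra X$ but to the birational map $\Psi:\sW\ra\sY$ onto the constructed normal Gorenstein complete-intersection family $\sY$: one writes $K_{\sW}=\Psi^*(K_{\sY})+\sB$, shows $\sB=0$ because $\sB$ is supported on the irreducible $W_0$, which dominates $Y_0$ and so is not exceptional, and then ampleness of $K_{W_0}=\Psi_0^*(K_{Y_0})$ forces $\Psi_0$, hence $\Psi$, to be finite, hence an isomorphism by normality of $\sY$. Your version applies the ampleness argument to the wrong map and cannot conclude. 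Likewise, termination of the iteration is not deferred ``Newton-polygon bookkeeping'' in the paper: it is proved by cutting $\Sigma$ with a general plane section through a general point of $X$, where the germ consists of $m$ smooth branches, and invoking Lemma~\ref{curve} (such plane germs are resolved by finitely many monic substitutions $D_j(w_{j-1})=w_j t$), with Lemma~\ref{divide} used repeatedly to control the divisibilities $P=Q^r$ and $Q^{r-2}\mid\tilde P$ at each stage. Finally, in (B) you overlook a small but genuine point: the linear coefficient of $Q_1$ is a section of $\hol_X(X)$, which is \emph{not} covered by the $i\geq 2$ extension hypothesis; this is why the paper first puts each covering in Tschirnhausen form, killing $a_1$ by the substitution $w\mapsto w-\frac{1}{m}a_1(x)$, before extending the remaining coefficients.
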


\begin{remark}\label{precisions}
(b1) More precisely, in (B1) above, there is a 
family $\sW$  such that $\sW $ is a complete intersection in $\sL_0\oplus\cdots\oplus\sL_{k}$
($\sL_i = \sL_0^{\otimes {m_i}}), $
 and $\sW $  is given  as above; moreover, for $t\ne 0$ in $T$, 
 the morphism $\Phi_t$, induced on $W_{t}$ by the bundle projection on $Z_{t}$,  is an embedding.
 
(b2)    sufficient condition in (B2) is the surjectivity  of  
$H^0 (Z_{0},\hol_{Z_{0} }(iX)) \ra H^0 (X, \hol_{X} (iX))$ for $ i\geq 2$; 
this is implied by  $H^1 (Z_{0},\hol_{Z_{0} }(iX)) = 0, \forall i \geq 1.$
\end{remark}
 
\begin{remark}\label{extends}
The line bundle $ \hol_{Z_{0}} (X)$ extends to a line bundle
$\sL_0 $ on the whole family $\sZ$, because of the Lefschetz (1,1) theorem, since $ \hol_{Z_{0}} (mX)$ does.

Observe moreover that there is a (non-canonical) isomorphism $$\Pic^0 (Z_{0}) \cong \Pic^0 (Z_{t}),$$
whereas  in general there is no isomorphism of  $\Pic (Z_{0})$ with  $ \Pic (Z_{t})$.
\end{remark}

\begin{remark}
(i) Consider the family of submanifolds of weighted projective spaces given in example \ref{weighted},
with equations \ref{steq}, and consider the 1-parameter deformation where we set $t_j = t^{n_j}$.

Then the equations considered in the proof of the theorem are many more than equations \ref{steq}, 
since for instance from the equation $\s(z) = t^{n_0} w_0$
we recover $w_0$ not directly but only after an iterated procedure: we inductively set $\s(z) = t^a v_{a-1}$,
so finally we get $w_0 = v_{n_0 -1}$.
\end{remark}

\section{Auxiliary results and proof of the main theorem}

The following  observation plays an important role in the proof.

 \begin{lemma} \label{smooth}
Suppose we have a one dimensional smooth family $p :  \sW \ra T$ of smooth projective varieties of dimension $n$ mapping to another  flat family $q: \sY  \ra T$ of  projective varieties of the same dimension via  a relative map $\Psi: \sW \ra \sY$ over a smooth
holomorphic  curve $T$ such that $q \circ \Psi = p$, i.e. we have the commutative diagram
\[\xymatrix@C-1pc{
  \sW \ar[rr]^{\Psi} \ar[dr]_{p} && \sY \ar[dl]^{q} \\
  & T.
}\]
  
Assume that
\begin{enumerate}
\item $\sY$ is normal and Gorenstein,
\item $\Psi$ is birational,
\item for $t\ne 0$ in $T$, $\Psi$ induces an isomorphism,
\item $K_{W_{0}}$ is ample.
\end{enumerate}
Then we have  that $\Psi$ is an isomorphism, in particular $W_{0} \cong Y_{0}$.
\end{lemma}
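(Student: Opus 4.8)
The plan is to prove that $\Psi$ is an isomorphism by showing it contracts no positive-dimensional subvariety; since $\Psi$ is proper birational and $\sY$ is normal, Zariski's Main Theorem then forces it to be an isomorphism. First I would record the standard consequences of hypotheses (1)--(3): as $\Psi$ is proper birational onto the normal variety $\sY$, one has $\Psi_{*}\mathcal{O}_{\sW}=\mathcal{O}_{\sY}$ and the fibres of $\Psi$ are connected, so that $\Psi$ is an isomorphism precisely when it is quasi-finite, i.e. when no curve is contracted. Moreover, since $\Psi$ induces an isomorphism over $T\setminus\{0\}$, the exceptional locus $\mathrm{Exc}(\Psi)$ is contained in the central fibre $W_{0}=p^{-1}(0)$.

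The crucial point is that any such contraction must be \emph{small}. Indeed $W_{0}$ is a smooth connected fibre of $p$, hence an irreducible divisor in $\sW$; if $\mathrm{Exc}(\Psi)$ had a divisorial component $D$, then $D\subseteq W_{0}$ with both irreducible of codimension one would give $D=W_{0}$, so that $\Psi$ would contract the divisor $W_{0}$, forcing $\dim\Psi(W_{0})<n$. But $\Psi(W_{0})=Y_{0}:=q^{-1}(0)$ has dimension $n$ by flatness of $q$, equal to $\dim W_{0}$, a contradiction. Therefore $\mathrm{Exc}(\Psi)$ has codimension $\ge 2$ and carries no exceptional divisor. Since $\sY$ is Gorenstein, $K_{\sY}$ is Cartier, so $K_{\sW}-\Psi^{*}K_{\sY}$ is a Cartier divisor on the smooth variety $\sW$ which vanishes wherever $\Psi$ is a local isomorphism; its support lies in $\mathrm{Exc}(\Psi)$, hence in codimension $\ge 2$, and a divisor with support of codimension $\ge 2$ is zero. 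Thus $\Psi$ is crepant, $K_{\sW}=\Psi^{*}K_{\sY}$.

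To exploit hypothesis (4), suppose $C\subset W_{0}$ is a curve contracted by $\Psi$. Because $W_{0}=p^{*}(0)$, its normal bundle $\mathcal{O}_{\sW}(W_{0})|_{W_{0}}$ is the restriction of $p^{*}\mathcal{O}_{T}(0)$ and hence trivial, so adjunction gives $K_{W_{0}}=(K_{\sW}+W_{0})|_{W_{0}}=K_{\sW}|_{W_{0}}$. Intersecting with $C$ and using the projection formula together with $\Psi_{*}C=0$ (as $C$ is contracted) and the crepancy $K_{\sW}=\Psi^{*}K_{\sY}$, I obtain
\[ K_{W_{0}}\cdot C \;=\; K_{\sW}\cdot C \;=\; \Psi^{*}K_{\sY}\cdot C \;=\; K_{\sY}\cdot \Psi_{*}C \;=\; 0, \]
contradicting the ampleness of $K_{W_{0}}$. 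Hence no curve is contracted, $\Psi$ is quasi-finite, and therefore an isomorphism, giving in particular $W_{0}\cong Y_{0}$.

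I expect the main obstacle to be the second paragraph: establishing that the irreducibility of the central fibre $W_{0}$ forbids a divisorial exceptional locus, so that the contraction is small and automatically crepant. This is exactly what lets the ampleness of $K_{W_{0}}$ bite, distinguishing the present situation from a genuine small contraction such as an Atiyah flop, where $\sY$ is still normal Gorenstein but $K_{\sW}$ is merely trivial, not ample, on the contracted curve. The supporting facts---$\Psi_{*}\mathcal{O}_{\sW}=\mathcal{O}_{\sY}$, the equality $\dim Y_{0}=n$ from flatness of $q$, the triviality of the normal bundle of $W_{0}$, and the vanishing of the discrepancy divisor in codimension one---are routine, but each is needed to make the final intersection computation rigorous.
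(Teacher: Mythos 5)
Your proof is correct and follows essentially the same route as the paper's: both establish that the discrepancy divisor $K_{\sW}-\Psi^*K_{\sY}$ is supported in the irreducible central fibre $W_0$, rule out a nonzero coefficient because $W_0$ maps onto the $n$-dimensional $Y_0$ (so cannot lie in the exceptional locus), deduce crepancy $K_{\sW}=\Psi^*K_{\sY}$, and then use ampleness of $K_{W_0}$ together with adjunction/projection formula to exclude contracted curves, concluding via finiteness plus normality of $\sY$. The only difference is presentational: you phrase the middle step as ``the contraction must be small,'' while the paper directly restricts the crepancy relation to the fibre to get $K_{W_0}=\Psi_0^*K_{Y_0}$; the mathematical content is identical.
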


\begin{proof}

We have $K_{\sW}=\Psi^*(K_{\sY}) + \sB$. Since we assume that $\Psi$ induces an isomorphism for $t \ne 0$ in $T$, the support of 
the Cartier divisor $\sB$ is contained in $W_{0}$, which is  irreducible.

Now  $Y_{0}$ has dimension $n$ and the morphism  $\Psi_0 : W_{0} \ra Y_{0}$ is generically finite, hence we conclude that $\sB=0$.
In particular, $K_{\sW}=\Psi^*(K_{\sY})$; restricting to the special fibre, we obtain  $K_{W_{0}} =\Psi_0 ^*(K_{Y_{0}})$. Since 
by assumption $K_{W_{0}}$ is ample, $\Psi_0$ is finite, hence also $\Psi$ is finite, hence an isomorphism in view of the normality of $\sY$.

\end{proof}

\begin{remark} Without the  assumption that $K_{W_{0}}$ is ample one can only assert that $Y_{0}$ is normal with at most canonical singularities.
\end{remark}

\begin{lemma} \label{divide}
In the hypotheses of theorem~\ref{MT}, we have that $\sigma^{m-i} \, | \, \sigma_i$ for $i=1,\ldots, m-1$.
\end{lemma}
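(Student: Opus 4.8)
The plan is to repackage the $m-1$ divisibilities $\sigma^{m-i}\mid\sigma_i$ into a single statement about the multiplicity of the hypersurface $\Sigma=\{F=0\}\subset\sZ$ along $X$, where
\[
F(z,t)=\sigma(z)^m+t\,\sigma_1(z)+t^2\sigma_2(z)+\cdots
\]
is the Taylor expansion of the defining equation of $\Sigma$. Since $Z_0$ is smooth and $X=\{\sigma=0\}$ is reduced, each divisibility can be tested at the generic point of every irreducible component of $X$, so it amounts to $\operatorname{ord}_X(\sigma_i)\ge m-i$. Let $\eta_X$ be such a generic point; then $\mathcal O_{\sZ,\eta_X}$ is a two–dimensional regular local ring with regular parameters $t$ and (the extension of) $\sigma$, and $e:=\operatorname{ord}_{\eta_X}(F)$ is the multiplicity of $\Sigma$ along $X$. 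Writing $v_i:=\operatorname{ord}_X(\sigma_i)$ one gets $e=\min\bigl(m,\ \min_{i\ge 1}(i+v_i)\bigr)$. The term $\sigma^m$ forces $e\le m$ always, and $e=m$ is precisely the assertion that $F$ carries no monomial $t^i\sigma^j$ of weighted order $i+j<m$, i.e.\ that $v_i\ge m-i$ for every $i$. Thus the whole lemma is equivalent to the single inequality $e\ge m$.

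To prove $e\ge m$ I would count branches. The key structural inputs are that $\Phi:\sW\to\Sigma$ is birational — indeed an isomorphism over $t\ne 0$ by condition (2) in the definition of a deformation to hypersurface embedding — and that $\sW$ is smooth. Take a general point $x_0\in X$ and a general two–dimensional transverse slice through it; the induced germ $C=\{F(u,t)=0\}$ is a reduced plane curve of multiplicity $e$ at the origin, and its preimage in the smooth variety $\sW$ is a smooth curve mapping finitely and birationally onto $C$, hence realizing the normalization $\widetilde C\to C$. Therefore the number of analytic branches of $C$ at the origin equals the number of points of $\sW$ lying over $x_0$. Because $t\circ\Phi=p$, we have $\Phi^{-1}(x_0)=\Phi_0^{-1}(x_0)\subset W_0$; and since $\Phi_0:W_0\to X$ is generically finite of degree $m$, hence generically \'etale in characteristic zero, the fibre over a general $x_0$ consists of exactly $m$ distinct reduced points (the standing ampleness of $K_{W_0}$ guarantees in addition that $\Phi_0$ is finite, as in Lemma~\ref{smooth}, which makes this count global, but only the generic behaviour is needed here). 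Hence $C$ has exactly $m$ branches, and as multiplicity is additive over branches, $e=\sum_{\text{branches}}\operatorname{mult}\ge m$. Together with $e\le m$ this gives $e=m$, i.e.\ $v_i\ge m-i$ for all $i$, which is the claim.

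The step I expect to be the main obstacle is the transversality/equisingularity bookkeeping that makes the slice argument rigorous: one must check that for a general slice through a general $x_0$ the preimage $\Phi^{-1}(\text{slice})$ is smooth and maps birationally (hence normalizes) $C$, that $\Phi^{-1}(x_0)$ has no positive–dimensional component, and that the branch count of $C$ really coincides with the number of these preimages. A secondary point to verify is the reduction in the first paragraph, namely that $\Sigma$ (equivalently $F$) is reduced — which follows from $\Phi$ being birational onto its image — so that ``multiplicity of $\Sigma$ along $X$'' and ``order of vanishing of the $\sigma_i$'' are the correct invariants, and that generic verification along each component of the reduced divisor $X$ indeed upgrades to the global divisibility of sections $\sigma^{m-i}\mid\sigma_i$.
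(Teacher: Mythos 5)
Your proposal is sound and, modulo the verifications you yourself flag, complete — but it reaches the lemma by a genuinely different mechanism from the paper's. The paper also works at a general point $p\in X$ with its $m$ preimages $q_1,\ldots,q_m$, but instead of counting branches numerically it writes the map explicitly: since ${\rm rank}\,(D\Phi_0)=n$ at a general point of $W_0$, near each $q_i$ the map $\Phi$ is an immersion whose image is a smooth graph $\{\sigma=\varphi_i(z,t)\}$ with $t\mid\varphi_i$ (its trace on $\{t=0\}$ lies in $X$); birationality of $\Phi$ then identifies the local equation of $\Sigma$ with $\prod_{i=1}^m(\sigma-\varphi_i)=\sigma^m-a_1\sigma^{m-1}+\cdots$, and since the $a_i$ are elementary symmetric functions of functions divisible by $t$, one gets $t^i\mid a_i$, which is exactly $\sigma^{m-i}\mid\sigma_i$ after matching Taylor coefficients in $t$. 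You replace this factorization by the numerical identity ${\rm ord}_{\eta_X}(F)=m$: the bound $\le m$ from the leading term and $\ge m$ from the branch count of a general plane slice, the branches being counted through the normalization supplied by the smooth $\sW$. Both arguments use the same two geometric inputs (exactly $m$ preimages of a general point; $\Phi_t$ an embedding for $t\ne 0$, which makes $\Phi$ birational and the $m$ branches distinct), so your reduction to ``multiplicity of $\Sigma$ along $X$ equals $m$'' is a correct repackaging, not a shortcut that begs the question. What the paper's version buys is the explicit local normal form of equation~\ref{Planecurve}, with $t^i\mid a_i$: this is not a throwaway, since precisely that shape is fed into Lemma~\ref{curve} and into the blow-up algorithm in the proof of Theorem~\ref{MT}. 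What your version buys is robustness — you never need the branches to be smooth graphs, only distinct and passing through $x_0$ — at the cost of the slice bookkeeping you list: smoothness of $\Phi^{-1}({\rm slice})$ (Bertini away from the base points, plus immersivity of $\Phi$ at the $q_i$), finiteness of $\Phi$ near the fibre over $x_0$, distinctness of the $m$ branches (which follows from injectivity of $\Phi_t$ for $t\ne 0$), and the equality of ${\rm ord}_{\eta_X}(F)$ with the multiplicity of the slice curve at a general $x_0$. All of these are true and standard, and they are, in effect, exactly the content that the paper's coordinate computation establishes directly.
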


\begin{proof}
Since the map $W_0 \to X$ is a generically finite map of degree $m$, given  a general point $p$ of $X$, the inverse image of $p$ consists of $m$ points $q_1, \dots, q_m$,
and at each $q_i$  the rank of the derivative of the morphism $W_0 \to Z_0$ is equal to $n$. Hence we get local coordinates $(w_1, \ldots, w_n, t)$ for $\sW$ at $q_i$ and local coordinates $(z_1, \ldots, z_n, z_{n+1},  t)$ for $\sZ$ at $p$ (not depending on $i$)
such that $\Phi$ is given by a function $f_i$
\[ f_i(w_1, \ldots, w_n, t)=(w_1, \ldots, w_n, \varphi_i(w_1, \ldots, w_n, t), t)\]
such that 
\[ f_i(w_1, \ldots, w_n, 0)=(w_1, \ldots, w_n, 0, 0).\]
Here  $\varphi_i (w_1, \ldots, w_n, t)$ is a holomorphic function. 

Hence at $p\in X$, there are variables $(z_1, \ldots, z_n, z_{n+1}, t)$ such that $\s =  z_{n+1}$,
and $\Si$ consists of $m$ smooth branches with equation 
$$\s - \varphi_i(z_1, \ldots, z_n, t) =0, \s = z_{n+1}$$ for $i=1, \ldots, m$.

Hence, setting $z: =(z_1, \ldots, z_n)$ the local equation of $\Si$ is 
\begin{equation} \label{Planecurve}\prod_{i=1}^m(\s-\fie_i(z, t))=\s^m-a_1(z, t)\s^{m-1}+\ldots+a_m(z, t) \end{equation}
and $t^i | a_i$, since $t |\fie_i$.
\end{proof}

\begin{remark}
 Since  we assume that $\Phi_t$ is an embedding when $t\ne 0$, for a fixed value of  $z: =(z_1, \ldots, z_n)$  
 equation \ref{Planecurve} yields a 
 plane curve  with $m$ smooth branches. Equivalently, we may view equation \ref{Planecurve} as giving
 a plane curve over a non algebraically closed field (the fraction field of the ring of power series in $z$).

\end{remark}

\begin{lemma} \label{local}
Assume the same hypotheses  of part (A)  of theorem \ref{MT} with $n=1$. Let $p\in W_0$ be any point. 
Then there is a neighborhood $U$ of $p$ in $W_0$ such that $\phi_0(U)$ is a smooth curve of $Z_0$ at $\phi_0(p)$.
\end{lemma}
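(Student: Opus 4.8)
The plan is to argue \emph{locally} at the given point $p\in W_0$ and at its image $p':=\phi_0(p)\in X\subset Z_0$, and to prove that the branch $B:=\phi_0(U)$ of $X$ through $p'$ has multiplicity one, i.e.\ is smooth. Fix local analytic coordinates $(z_1,z_2)$ on $Z_0$ centred at $p'$ together with the parameter $t$, so that near $p'$ the image $\Sigma:=\Phi(\sW)$ is the hypersurface $\{F=0\}\subset\sZ$ with
\[F=\sigma^m+t\sigma_1+t^2\sigma_2+\cdots .\]
By Lemma~\ref{divide} we have $\sigma^{m-i}\mid\sigma_i$ for $1\le i\le m-1$, so $F=\sigma^m+\sum_{i=1}^{m-1}t^i\sigma^{m-i}\tau_i+\sum_{i\ge m}t^i\sigma_i$. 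Note that the central fibre of $\Sigma\to T$ is the non‑reduced cycle $\Sigma_0=mX$, whereas the central fibre $W_0$ of $p\colon\sW\to T$ is reduced and smooth (the map $p$ is a smooth morphism) and is carried by $\phi_0$ onto $X$ with degree $m$.

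First I would fix the global shape of $\Phi$. Since $\phi_0\colon W_0\to X$ is a generically finite morphism of the \emph{curve} $W_0$ onto the curve $X$ it is automatically finite and contracts nothing (for $n=1$ this makes the finiteness step free of the ampleness hypothesis); as $\Phi$ is an isomorphism over $t\neq0$, the morphism $\Phi\colon\sW\to\Sigma$ is proper and quasi‑finite, hence finite, and birational. Because $\sW$ is smooth, hence normal, this identifies $\sW$ with the normalization $\Sigma^{\nu}$ of $\Sigma$. Thus the normalization of the hypersurface $\Sigma$ is smooth and fibres smoothly over $T$, with reduced central fibre $W_0$, while $\Sigma_0=mX$. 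The case $m=1$ is then immediate: here $\sigma$ is the reduced equation of $X$ and $F=\sigma+t\sigma_1+\cdots$, so $\Sing(\Sigma)\cap\{t=0\}=\{\sigma=\sigma_{z_1}=\sigma_{z_2}=\sigma_1=0\}\subseteq\Sing(X)$ is finite; being a hypersurface, $\Sigma$ is Cohen--Macaulay, and regular in codimension one, hence normal, so $\sW=\Sigma^{\nu}=\Sigma$. Identifying central fibres gives $W_0=\Sigma_0=X$, and since $W_0$ is smooth, $X$ is smooth at $p'$.

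For $m\ge2$ one computes instead that the partials $F_{z_1},F_{z_2},F_t$ all vanish along $X\times\{0\}$ — at $t=0$ they equal $m\sigma^{m-1}\sigma_{z_1}$, $m\sigma^{m-1}\sigma_{z_2}$ and $\sigma_1=\sigma^{m-1}\tau_1$, each divisible by $\sigma^{m-1}$ — so $\Sigma$ is singular \emph{all along} $X$, is non‑normal there, and the normalization genuinely separates the $m$ sheets $\sigma=\varphi_i(z,t)$ (with $t\mid\varphi_i$) of Lemma~\ref{divide}. Here I would conclude by contradiction: suppose $\mathrm{mult}_{p'}\sigma=e\ge2$, so $B$ is singular at $p'$, and substitute a local parametrization $\nu(u,t)=(z_1(u,t),z_2(u,t),t)$ of $\sW$ near $p$ into $F(\nu)\equiv0$. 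Since $B\subset\{\sigma=0\}$ the pullback $s:=\sigma(z_1(u,t),z_2(u,t))$ is divisible by $t$, say $s=t\hat s$; using $\sigma^{m-i}\mid\sigma_i$, the identity $F(\nu)\equiv0$ collapses, after dividing by $t^m$, to a \emph{monic} relation
\[\hat s^{\,m}+\tau_1\hat s^{\,m-1}+\cdots+\tau_{m-1}\hat s+\sigma_m+t(\cdots)\equiv0 .\]
Tracking the orders of vanishing of $\hat s$ and of the restrictions $\tau_i|_B,\sigma_m|_B$ along the branch, while imposing that $(u,t)$ are \emph{smooth} coordinates on $\sW$ with $\{t=0\}$ reduced, should make this relation incompatible with $e\ge2$, forcing $e=1$ and hence $B=\phi_0(U)$ smooth.

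The genuine obstacle is precisely this last step for $m\ge2$: to extract smoothness of the \emph{reduced} limit curve $X$ from smoothness of the normalization $\sW$ of a surface whose central fibre $mX$ is non‑reduced. The difficulty is that a smooth curve may perfectly well dominate a singular (e.g.\ cuspidal) curve, so the argument cannot be purely functorial and must exploit the exact divisibility $\sigma^{m-i}\mid\sigma_i$ together with the reducedness of $\{t=0\}\subset\sW$. Concretely I expect to settle it either by an explicit weighted blow‑up (Newton‑polygon) analysis of $F$ along $X$, comparing the orders in $(u,t)$ of the terms after pulling back by the parametrization of a putative singular branch, or by a descending induction that peels off one sheet $\sigma=\varphi_i$ at a time, lowering the degree of the covering until the smooth model forces $\mathrm{mult}_{p'}\sigma=1$.
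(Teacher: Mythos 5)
Your preliminary reductions are sound --- the $m=1$ case via normality of $\Sigma$, the identification $\sW\cong\Sigma^{\nu}$ from finiteness and birationality, and the monic relation obtained by pulling back $F$ along a parametrization and dividing by $t^m$ --- but the proof has a genuine gap exactly where you flag it: the step ruling out $e=\mathrm{mult}_{p'}\sigma\ge 2$ when $m\ge 2$ is never carried out, and in fact it \emph{cannot} be carried out from the ingredients you allow yourself. All the data you propose to exploit (smoothness of $\sW$, smoothness and reducedness of the central fibre, finiteness and birationality of $\sW\ra\Sigma$, the divisibility of Lemma~\ref{divide}, and the monic relation in $\hat s$) are also present in situations where the conclusion of Lemma~\ref{local} fails, so no bookkeeping of vanishing orders in that relation can force $e=1$. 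Concretely, take $n=1$, $\sW=\CC\times T$, $\sZ=\CC^2\times T$ and
\[ \Phi(w,t)=(w^4,\ w^6+tw,\ t). \]
Here $\phi_0\colon W_0\ra X$ is the degree $2$ map $w\mapsto(w^4,w^6)$ onto the cuspidal cubic $X=\{y^2=x^3\}$, the image cycle at $t=0$ is $2X$, each $\Phi_t$ with $t\ne0$ is an immersion and is birational onto its image (only finitely many double points, from $w\mapsto \pm i w$ with $w^5=t(\pm i-1)/2$), $\Phi$ is finite and birational onto $\Sigma=\Phi(\sW)$, so $\sW$ is the normalization of $\Sigma$, and one checks (e.g.\ via the resultant $F=\sigma^2+t\sigma_1+\cdots$, where here $\sigma_1\equiv0$) that the divisibility of Lemma~\ref{divide} and your monic relation hold --- yet the branch of $X$ at the origin is a cusp. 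The only hypothesis of Theorem~\ref{MT}(A) that fails is that $\Phi_t$ be an \emph{embedding} for $t\ne0$: it is not injective. So embeddedness must enter the argument in a way strictly stronger than merely supplying birationality of $\Phi$, which is the only place it enters yours.

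This is precisely why the paper's proof of Lemma~\ref{local} is topological rather than algebraic. After factoring $\phi_0$ through the normalization of $X$ and writing a Puiseux parametrization of the putative singular branch, it invokes Zariski's theorem that the link of a singular plane-curve branch is a non-trivial iterated torus knot; on the other hand the link $L_0$ is isotopic to the link $L_t$ cut out by the nearby curve, and since for $t\ne0$ the curve $W_t$ is a smoothly \emph{embedded} holomorphic curve through the relevant point (here purity of the branch locus is used to locate the ramification curve $R$, and embeddedness to conclude $R_{red}=\{w=0\}$), $L_t$ is an unknotted $S^1\subset S^3$, a contradiction. In the example above $W_t$ is immersed but not embedded, and no such contradiction arises --- exactly the phenomenon your algebraic relation is blind to. To complete your proof you would need to inject this topological input (or some equivalent essential use of the injectivity of $\Phi_t$), not refine the Newton-polygon analysis.
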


\begin{proof} We have a factorization $\phi_0=\nu\circ \psi_0$ near the point $p$
\[ W_0\stackrel{\psi_0}{\longrightarrow} X^{\rm nor}\stackrel{\nu}{\longrightarrow} X\]
where $\nu: X^{\rm nor}\to X$ is the normalization. 

There are respective local coordinates $w$ around $p\in W_0$, $u$   around  $\psi_0(p)$, $x,y$ around $ \phi_0(p)$ such that:
\begin{itemize}
\item
$\psi_0(w)=w^h=u$,
\item
 $x(u), y(u)$ are a Puiseux parametrization of the branch associated to $\phi_0 ( p)$.
 If the branch is nonsingular, without loss of generality  $y\equiv 0$ and there is nothing to prove.
 \item
 Otherwise   the branch is singular, and we have a Puiseux parametrization of the form
\[ \begin{cases} 
x=u^d\\
y=u^e+g(u)u^{e+1}, \,{\rm where}\, \,\, e>d.
\end{cases}\]
\end{itemize}
Hence the branch is  the zero set of a pseudo-polynomial in $x$,
\[x^d+a_1(y)x^{d-1}+\cdots +a_d(y).\]

Now, we can write, locally  identifying $\sZ$ to $\CC^2\times T$,   
$\Phi (w,t) $ as $$ \Phi (w,t)= (\phi(w,t),t)$$ and $\phi(w, t)$ as 
\[\begin{cases}
x=w^{dh}+t\phi_1(w, t) \\
y=w^{eh}(1+g(w^h)w^h)+t\phi_2(w, t)
\end{cases}\]
The link of the branch $\phi_0(p)$ is, by Zariski's theorem \cite{Zar}, an iterated non-trivial toral knot, running $h$ times. However, $L_0$ is isotopic to $L_t$, which is  gotten by  the image of the circle $|w|=\epsilon$ under  $\phi(w, t)$. 

Observe that the map $\phi(w,t)$, by purity of branch locus, ramifies on a curve $R$, which is not contained in $\{ t=0\}$:
since $W_t$ is embedded for $t\neq 0$, it follows that $R$ is exceptional and that the curves $W_t$ are a family of curves through the origin
$x=y= 0 \in \CC^2$. Moreover, again since $W_t$ embeds, the reduced curve $R_{red}$ is a smooth curve which projects isomorphically to the $t$-axis,
so we may assume without loss of generality that $R_{red} = \{ w=0\}$.

 The link $L_t$ is contained in the submanifold $W_t$  of a four dimensional ball $B$ around the origin; $W_t$ is a smooth holomorphic curve 
through  the point $\phi(0, t) = 0$,
   hence $L_t$ yields  an unknotted circle  $S^1\subset S^3$.  We have derived a contradiction from assuming $d>1$, and that the branch is singular.

\end{proof}

\begin{lemma} \label{coincide}
Assume the same hypotheses of Theorem~\ref{MT}.  Let 
$$W_0^f=\{ p\in W_0\, | \, \text{$\phi_0$ is finite at $p$}\}.$$ Then if $p\ne p'\in W_0^f$ and $\phi_0(p)=\phi_0(p')=y_0$, then the germs of analytic subsets  $Y_0 : = \phi_0(U_p)$ and $Y'_0 : = \phi_0(U_{p'})$ coincide.
\end{lemma}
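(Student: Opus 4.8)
The plan is to argue by contradiction, working in the total space of the family and exploiting that $\Phi_t$ is an embedding for $t\ne 0$. Throughout, let $\Sigma:=\Phi(\sW)\subset\sZ$ be the total image; since $\sW$ is irreducible of dimension $n+1$ and $\sZ$ is smooth of dimension $n+2$, $\Sigma$ is a hypersurface, locally principal at every point. First I would record the local structure at $p$ and $p'$. Because $p,p'\in W_0^f$, the fibre of $\Phi$ through $p$ (resp. $p'$) is finite: any point of $\Phi^{-1}(y_0,0)$ near $p$ lies in $W_0\cap\phi_0^{-1}(y_0)$, which is finite. Hence I may choose small disjoint irreducible neighbourhoods $V_p,V_{p'}\subset\sW$ of $p,p'$ on which $\Phi$ is finite, with $V_p\cap W_0=U_p$ and $V_{p'}\cap W_0=U_{p'}$, and set $S_p:=\Phi(V_p)$, $S_{p'}:=\Phi(V_{p'})$. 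As finite images of smooth $(n+1)$-dimensional germs, these are irreducible germs of dimension $n+1$, i.e. branches of $\Sigma$ at $(y_0,0)$. Moreover a point $\Phi(x)\in S_p$ has $t$-coordinate $0$ if and only if $x\in W_0$; therefore, as sets, $S_p\cap\{t=0\}=\phi_0(U_p)=Y_0$ and likewise $S_{p'}\cap\{t=0\}=Y_0'$, each irreducible of dimension $n$.

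Now suppose, for contradiction, that $Y_0\ne Y_0'$. Then $S_p\ne S_{p'}$ (equal branches would have equal central fibres), so $S_p,S_{p'}$ are two distinct irreducible hypersurface germs in the smooth, hence factorial, space $\sZ$. Their intersection is therefore a complete intersection of codimension two, that is, pure of dimension $n$. On the other hand $S_p\cap S_{p'}\cap\{t=0\}\subseteq Y_0\cap Y_0'$, and since $Y_0\ne Y_0'$ are distinct $n$-dimensional subvarieties of the $(n+1)$-dimensional $Z_0$, this has dimension at most $n-1$. Consequently $S_p\cap S_{p'}$ cannot be contained in $\{t=0\}$, for that would force its dimension to be at most $n-1<n$. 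Hence $S_p\cap S_{p'}$ has a component $C$ of dimension $n$ which dominates $T$.

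Finally I would convert this into a violation of injectivity. For all sufficiently small $t\ne 0$ the slice $C_t:=C\cap\{t=\mathrm{const}\}$ is nonempty, and every $q\in C_t\subseteq S_p\cap S_{p'}$ lies in $\Phi_t(V_p\cap W_t)\cap\Phi_t(V_{p'}\cap W_t)$. Thus $q=\Phi_t(a)=\Phi_t(b)$ for some $a\in V_p\cap W_t$ and $b\in V_{p'}\cap W_t$; as $V_p\cap V_{p'}=\emptyset$ we have $a\ne b$, contradicting that $\Phi_t$ is an embedding. Therefore $Y_0=Y_0'$.

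The decisive step is the dimension count of the second paragraph: its geometric content is that two distinct finite sheets of $\phi_0$ can limit onto two different local branches of $X$ only if the total image $\Sigma$ acquires, away from the central fibre, an $n$-dimensional locus of double points — precisely what the embedding of the nearby fibres $W_t$ forbids. I expect the only delicate points to be the bookkeeping ensuring $S_p,S_{p'}$ are genuine $(n+1)$-dimensional branches (which rests on the finiteness of $\Phi$ at $p,p'$, itself a consequence of $p,p'\in W_0^f$) and the purity/codimension-two claim for the intersection of two distinct hypersurface germs, both of which are standard once $\sZ$ is known to be smooth. Note that, interestingly, this argument does not seem to use the ampleness of $K_{W_0}$.
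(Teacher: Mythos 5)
Your proof is correct, but it takes a genuinely different route from the paper's. The paper argues by cutting $Y_0$ and $Y_0'$ with a general linear $\CC^2$ through $y_0$, obtaining two plane curve germs $C, C'$ whose local intersection number $(C\cdot C')_{y_0}=d\ge 1$ is a topological invariant (the linking number of $C\cap S^3$ and $C'\cap S^3$); it then deforms $C,C'$ to plane sections $C(t), C'(t)$ of the images of $W_t$, which by conservation of this number must still meet in $d$ points near $y_0$ for small $t\ne 0$, yielding the same violation of injectivity of $\Phi_t$ that you reach. Your argument replaces this topological step by local algebra in the total space: the two sheets $S_p, S_{p'}$ are distinct irreducible hypersurface germs in the smooth, hence factorial, space $\sZ$, so by Krull's principal ideal theorem (plus irreducibility, which rules out a common codimension-one component) their intersection is pure of dimension $n$; since its trace on $\{t=0\}$ lies in $Y_0\cap Y_0'$ and so has dimension at most $n-1$, some component must dominate $T$. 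What your approach buys: it avoids choosing a general plane section and avoids justifying the persistence of intersection numbers in the family (a point the paper leaves implicit), relying instead on standard facts — local finiteness of $\Phi$ at points of $W_0^f$, factoriality of regular local rings, Krull's height bound, and the openness of nonconstant holomorphic functions on irreducible complex spaces, which is the one fact you should cite explicitly to get that $C_t\neq\emptyset$ for all small $t$. What the paper's approach buys: it is uniform with the topological flavor of the neighboring Lemma~\ref{local} (links, Zariski's theorem) and exhibits the concrete invariant $d$ that persists under deformation. Both proofs use only the hypothesis that $\Phi_t$ is an embedding for $t\neq 0$, and you are right that the ampleness of $K_{W_0}$ plays no role in this lemma — the paper does not use it here either.
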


\begin{proof} Cut now $\CC^{n+1}$ with a general linear $\CC^2$ passing through $y_0$, so that we get curves $\CC^2\cap Y_0=C$, $\CC^2\cap Y_0'=C'$, passing through $y_0$. Now $(C\cdot C')_{y_0}=d\ge 1$, and indeed $d$ is a topological invariant, it is the linking number of $C\cap S^3$ and $C'\cap S^3$.

For $|t|<<1$,  we  can deform  the curves $C, C'$ which are in the image of $W_0$, to  curves  $C(t), C'(t)$ in the image
of $W_t$: these have the property that  in a neighborhood of $y_0\in\CC^2$ they intersect in $d$ points, counted with multiplicities. Hence, if $y(t)\in C(t)\cap C'(t)$, then there are $p(t)\ne p'(t)\in W_t$ with $\phi_0(p(t))=\phi_0(p'(t))$. This is a contradiction.

\end{proof}

\begin{proposition} \label{normal}
Assume the same hypotheses of Theorem~\ref{MT}.  Then $X$ is normal.
\end{proposition}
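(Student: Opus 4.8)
The plan is to use Serre's criterion. Since $X$ is a reduced hypersurface in the smooth variety $Z_0$, it is Cohen--Macaulay, hence satisfies condition $S_2$; therefore it will be normal as soon as we show that it is regular in codimension one, i.e. that $\Sing X$ has codimension $\geq 2$ in $X$. So I would argue by contradiction, assuming that $\Sing X$ has an irreducible component $D$ of dimension $n-1$, and derive a contradiction from the curve results already at our disposal.

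The heart of the argument is a reduction to the curve case $n=1$. I would fix a relatively very ample line bundle on $\sZ$ and cut $\sZ$ by $n-1$ general members $D_1, \dots, D_{n-1}$ of it, obtaining (by Bertini) a subfamily $\pi' : \sZ' \ra T$ of smooth projective surfaces; then I set $\sW' := \Phi^{-1}(\sZ')$, which by Bertini applied on the smooth total space $\sW$ is a subfamily $p' : \sW' \ra T$ of smooth projective curves. The restriction $\Phi' := \Phi|_{\sW'}$ still satisfies $\pi' \circ \Phi' = p'$, induces an embedding of $W_t'$ for $t \neq 0$ (being the restriction of the embedding $\Phi_t$), and on $W_0'$ is generically finite of degree $m$ onto the reduced curve $X' := X \cap Z_0'$. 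Moreover $K_{W_0'}$ is again ample: by adjunction $K_{W_0'} = (K_{W_0} + \sum_i \Phi_0^* D_i)|_{W_0'}$, which is the restriction of the sum of the ample divisor $K_{W_0}$ with the nef divisors $\Phi_0^* D_i$, hence is ample. Thus the cut family verifies all the hypotheses of part (A) of Theorem~\ref{MT} with $n=1$.

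Now I would play the curve case against the assumed singularity. At every point $y_0 \in X'$ the fibre of $\phi_0' : W_0' \ra X'$ is finite, since a nonconstant morphism of smooth projective curves is finite; so each point is a finite point, and Lemma~\ref{local} says that every local branch $\phi_0'(U_p)$ is smooth, while Lemma~\ref{coincide} forces all the branches lying over a given $y_0$ to coincide. Hence near each $y_0$ the curve $X'$ is a single smooth branch, so $X'$ is smooth everywhere. On the other hand $X'$ must be singular: since $\dim D = n-1$, for general $D_i$ the intersection $D \cap Z_0'$ is a nonempty finite set of points, and at such a point $y_0$ one has $d\sigma = 0$ (because $y_0 \in D \subset \Sing X = \{\sigma = 0,\ d\sigma = 0\}$), whence $d(\sigma|_{Z_0'}) = (d\sigma)|_{Z_0'} = 0$; as $X'$ is a proper hypersurface of the surface $Z_0'$ cut out by $\sigma|_{Z_0'}$, the point $y_0$ is a singular point of $X'$. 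This contradicts the smoothness just established, so no codimension-one component $D$ of $\Sing X$ can exist, and $X$ is normal.

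The step I expect to be the most delicate is the reduction itself: one must guarantee, via a sufficiently careful (relative) Bertini argument, that the cut families $\sW' \ra T$ and $\sZ' \ra T$ are genuinely smooth families over $T$ \emph{including along the special fibre} $t=0$, that $X'$ stays reduced, and that the generic degree of $\Phi_0'$ remains $m$. Only once these transversality statements are secured may one legitimately feed the cut-down data into the $n=1$ Lemmas~\ref{local} and \ref{coincide}; the remaining Jacobian computation showing that $X'$ inherits the singularity of $X$ along $D$ is then routine.
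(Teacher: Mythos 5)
Your proof is correct, and it rests on the same two engines as the paper's (Serre's criterion for the hypersurface $X$, plus the curve Lemmas \ref{local} and \ref{coincide}), but your reduction to the curve case is genuinely different. The paper never cuts the family: it applies Lemma \ref{coincide} directly in dimension $n$ (so that the germ of $X$ at the image of a finite point is the image of a single germ of $W_0$), stratifies $W_0$ by the rank of $D\phi_0$, notes that the strata $\sC_k$ with $k\le n-2$ have images of dimension $\le n-2$, and then, at a general point of $\sC_{n-1}$, chooses adapted coordinates $(v,w)$ with $\sC_{n-1}=\{w=0\}$, writes $\phi_0(v,w)=(v,x(v,w),y(v,w))$, and reruns the one-variable Puiseux/knot argument of Lemma \ref{local} on the slice $x(0,w),\,y(0,w)$ to reach a contradiction. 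You instead cut $\sZ$ and $\sW$ by $n-1$ general relative hyperplane sections so as to manufacture an honest $1$-parameter deformation to hypersurface embedding with $n=1$, to which Lemmas \ref{local} and \ref{coincide} apply verbatim, and then contradict the singularity that $X'=X\cap Z_0'$ inherits along $D\cap Z_0'$. Your route buys modularity---the curve lemmas are quoted as black boxes, with no coordinate computation---at the price of the transversality package you correctly flag: smoothness of both cut families over $T$ including at $t=0$ (Bertini on the central fibres, using that $\Phi_0^*$ of a very ample system is base-point-free, plus openness of smoothness after shrinking the germ $T$), irreducibility and reducedness of $W_0'$ and $X'$, preservation of the degree $m$ (the general $Z_0'$ avoids the image of the non-finite locus, which has dimension $\le n-2$), and ampleness of $K_{W_0'}$, which your adjunction argument (restriction of ample plus pulled-back nef divisors) settles. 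The paper's local route avoids all global Bertini verifications, but in exchange it cannot invoke Lemma \ref{local} as stated and must redo its argument on a coordinate slice; both proofs are sound, yours being the more systematic reduction and the paper's the more self-contained local analysis.
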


\begin{proof} 
$X$ is a hypersurface in a smooth manifold, hence it is normal if and only if it is smooth outside of codimension $2$
in $X$.

The image of the points in $W_0 \setminus W_0^f$ is a Zariski closed subset of codimension$\ge 2$ in $X$.

Hence it suffices to consider points $q \in X$ which are  image points only of points $p \in W_0^f$.

By  Lemma~\ref{coincide} the germ of $X$ at $q$ equals the image of the germ  of $W_0$ at $p$.

Let $\sC_k=\{ p\in W_0\, |\, {\rm rank}(D\phi_0)_p=k\}$. Then $\phi_0(\sC_k)$ has dimension $\le n-2$ for $k\le n-2$, and 
we conclude that  $X$ is  smooth outside of codimension $2$ unless ${\rm dim}\,\phi_0(\sC_{n-1})=n-1$.

 It also suffices to consider the general point $p$ of $\sC:=\sC_{n-1}$, where $\sC$ is smooth 
 of  dimension $n-1$ and rank$(D(\phi_0|_{\sC}))= n-1$.

Let $(\hat d-1)$ be the multiplicity of $\sC$ in the  locus given by the $n \times n$
minors of the derivative matrix. 

There are local coordinates $(v, w)$ in a neighborhood of $p \in W_0$, with $v=(v_1,\ldots, v_{n-1})$,  such that 
$\sC=\{ w =0\}$ and  such that
\[ \phi_0(v, w)=(v, x(v, w), y(v, w)).\]
The locus  given by the $n \times n$
minors of the derivative matrix is then just the locus $\frac{\partial x}{\partial w}=0, \frac{\partial y}{\partial w}=0$. 

Without loss of generality, at the general point of $\sC$ we can assume that 
\[ x(v, w)=\sum_{i=\hat d}^{\infty} a_i(v)w^i,\]
and, since $a_{\hat d}(x)\not\equiv 0$, we may assume that $a_{\hat d}(0)\ne 0$. Hence we choose coordinates $x,y$ with
\[ \begin{cases}
x=w^{\hat d},\\
y=w^e+\cdots \,\, \text{with $e>\hat d$ and $\hat d\not\vert e$}.
\end{cases} \]

Then, if $y\not\equiv 0$, we get, for any $v$ in a neighborhood of $0$, a singular curve branch. The same argument as for the case $n=1$ applied to $x(0, w), y(0, w)$ gives a contradiction. So we have established the proof.

\end{proof}

\begin{lemma} \label{curve}
Let $C$ be a germ of a plane curve singularity consisting of $m$ smooth branches with non vertical tangents, i.e. the local equation of $C$ is
$F(y, t) = 0$, with 
\[\begin{array}{l}
F(y, t):=\\
=\prod_{i=1}^m (y-\fie_i(t)) \\
=y^m-\s_1(\fie_1, \ldots, \fie_m)y^{m-1}+\s_2(\fie_1, \ldots, \fie_m)y^{m-2}+\cdots +(-1)^m\fie_1\cdots\fie_m.
\end{array}\]
Here, $\s_i$ is the i-th elementary symmetric function, and  $t| \fie_i(t)$, since $(0,0)$ is the singular point, hence $t^i$ divides 
$\s_i(\fie_1, \ldots, \fie_m)$.

 Then the singularity can be resolved by iterated blow-ups of the form:
\[\begin{cases}
y=w_0t\\
D_1(w_0)=w_1t \\
D_2(w_1)=w_2t \\
\cdots\, \,\,\,\, \cdots \\
D_k(w_{k-1})=w_kt
\end{cases}\]
where the $D_j$'s are monic polynomials.
\end{lemma}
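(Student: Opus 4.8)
The plan is to realize the asserted chain of equations as an explicit sequence of point blow‑ups of the smooth surface $\CC^2_{(y,t)}$, and then to prove, by induction on the total tangency of the branch configuration, that after finitely many such steps the strict transform of $C$ becomes smooth. The one structural feature I will keep track of throughout is that, by the non‑vertical tangent hypothesis, each branch is a graph $y=\varphi_i(t)$ over the $t$‑axis; equivalently $F$ is the monic polynomial $\prod_i\bigl(y-\varphi_i(t)\bigr)$ of degree $m$ in $y$, with $t\mid\varphi_i$ for all $i$.

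First I would carry out the initial substitution $y=w_0t$. Since $t\mid\varphi_i$, one gets $F(w_0t,t)=t^mF_1(w_0,t)$ with $F_1(w_0,t)=\prod_i\bigl(w_0-\psi_i(t)\bigr)$, where $\psi_i:=\varphi_i/t$ is holomorphic; thus $F_1$ is again monic of degree $m$ in $w_0$, and its branches $w_0=\psi_i(t)$ are again graphs over $t$, now passing through the points $w_0=\psi_i(0)=\varphi_i'(0)$. This is precisely the blow‑up of the origin together with passage to the strict transform, and it reduces the problem to resolving $F_1$. The general step is identical in form: at stage $j$ the current strict transform has branches that are graphs $w_{j-1}=\psi^{(j)}_i(t)$ over $t$, and I let $D_j(w_{j-1})$ be the \emph{squarefree} monic polynomial whose roots are exactly the finitely many points $\beta$ of the line $\{t=0\}$ through which at least two of these branches pass; then I perform the blow‑up $D_j(w_{j-1})=w_jt$. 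Because $D_j$ is squarefree, a Jacobian check shows $\{D_j(w_{j-1})=w_jt\}$ is smooth over the previous surface, and over each center $\beta$ it is, after absorbing the unit $\prod_{\beta'\neq\beta}(w_{j-1}-\beta')$, the ordinary blow‑up of the point $(\beta,0)$. Branches not passing through any center sit where $D_j\neq0$ and are carried isomorphically, so already‑separated branches remain smooth and separated.

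To control termination I would use the invariant $N:=\sum_{i<j}\mathrm{ord}_t(\varphi_i-\varphi_j)$, which is the total pairwise intersection multiplicity of the branches (since for smooth graph‑branches $(\{y=\varphi_i\}\cdot\{y=\varphi_j\})_0=\mathrm{ord}_t(\varphi_i-\varphi_j)$). By the classical behaviour of intersection numbers under blow‑up, whenever two smooth branches are blown up at a common point their intersection multiplicity drops by exactly $1$, while two branches meeting at distinct centers become disjoint; hence at every stage that still carries a singular point (a center through which $\geq 2$ branches pass) the invariant $N$ strictly decreases, and the branches remain graphs over $t$, so the construction can be iterated. When $N$ reaches its minimal value the current polynomial meets $\{t=0\}$ in $m$ distinct points, i.e. it has distinct roots at $t=0$, so the strict transform is smooth and the process stops after a finite number $k+1$ of blow‑ups, producing the displayed chain with monic $D_1,\ldots,D_k$.

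The main obstacle, and the point requiring the most care, will be the local verification at a blow‑up center that the unit factor $\prod_{\beta'\neq\beta}(w_{j-1}-\beta')$ coming from the other centers does not destroy the inductive structure: one must check that, after dividing off the exceptional $t^{\,\mathrm{mult}}$, the strict transform near $\beta$ is again — up to a holomorphic unit, hence after Weierstrass preparation — monic in the new coordinate $w_j$ with branches that are graphs over $t$, and that the pairwise tangency orders drop as claimed. This is where the non‑vertical tangent hypothesis is used repeatedly, since it is exactly what guarantees at each stage both the clean factorisation of $t^{\,\mathrm{mult}}$ off the exceptional divisor in $D_j(w_{j-1})=w_jt$ and the persistence of the monic graph‑branch normal form that makes the iteration possible.
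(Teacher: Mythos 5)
Your overall skeleton matches the paper's proof: first the substitution $y=w_0t$, then iterated substitutions $D_j(w_{j-1})=w_jt$ with $D_j$ monic and vanishing at the bad points of the exceptional locus, with termination controlled by intersection numbers of smooth branches. But there is a genuine gap in how you identify the centers from the second substitution onward. You define $D_j$ through the branches viewed as graphs $w_{j-1}=\psi^{(j)}_i(t)$, i.e.\ through their projections to the $(w_{j-1},t)$-plane. The iterated surface $S_{j-1}=\{D_{j-1}(w_{j-2})=w_{j-1}t\}$ is not that plane: its locus $\{t=0\}$ is a union of several lines, one over each root of $D_{j-1}$, and two branches that are \emph{disjoint} in $S_{j-1}$ can have \emph{identical} $(w_{j-1},t)$-projections. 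Concretely, take $m=4$ with $\varphi_1=t$, $\varphi_2=-t$, $\varphi_3=t+t^2$, $\varphi_4=-t+t^2$. After $y=w_0t$ the branches are $\psi_i=1,-1,1+t,-1+t$, your $D_1=(w_0-1)(w_0+1)$, and on $S_1$ the four strict transforms pass through the four distinct points $(w_0,w_1)=(1,0),(-1,0),(1,2),(-1,-2)$, so the resolution is already complete; yet branches $1$ and $2$ both have $w_1\equiv 0$, so your criterion (``at least two branches through a point of $\{t=0\}$'') reports a spurious center at $w_1=0$, and will keep doing so after every further substitution, since no move of the allowed form can separate two coincident projections. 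Your process therefore never terminates on this example, and your invariant $N$ --- which is correctly defined via actual tangency orders and has already reached its minimum --- does not control the process you actually run: the proof conflates intersection in the surface with coincidence of projections.

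A second, related error is the claim that branches not passing through any center ``sit where $D_j\neq0$ and are carried isomorphically''. Over $\{t=0,\ D_j(w_{j-1})\neq0\}$ the surface $\{D_j(w_{j-1})=w_jt\}$ is empty, so the strict transform of such a branch has $w_j=D_j(\psi_i(t))/t\to\infty$ as $t\to0$ and escapes the chart: with the three branches $t,-t,t+t^2$ your recipe gives $D_1=w_0-1$ and loses the branch $\varphi_2=-t$ entirely. Since Lemma~\ref{curve} (and its use for $\sW$ in Theorem~\ref{MT}, which must be proper over $T$ and of degree $m$ over $X$) requires the whole resolution to be cut out by the displayed chain of equations, every branch must stay inside the iterated chart. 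Both defects are repaired simultaneously by letting $D_j$ be the squarefree monic polynomial vanishing at the $w_{j-1}$-coordinates of \emph{all} points of the current strict transform on $\{t=0\}$ (not only the singular ones), and by running the descent of $N$ on intersection multiplicities computed in the iterated surface, stopping when the strict transform itself (not its projection) is smooth. This is in effect how the paper avoids the problem: its iteration is driven by the defining equation of the strict transform on the iterated surface, whose coefficients still involve the earlier variables (its $D_1$ is a gcd of the equations of the singular locus, with the residual factor $G(w_0)$ carried along in the next equation), and in the application inside the proof of Theorem~\ref{MT} the relevant polynomial is an exact power $Q^r$ of an irreducible polynomial, so every branch automatically passes through a center.
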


\begin{proof} 
$F(y,t)$ is a pseudo-polynomial. Write
\[F(y, t)=y^m+t \  b_1(t)y^{m-1}+\cdots+t^mb_m(t).\]
The first blow-up yields
\[ P(w_0, t)=w_0^m+b_1(t)w_0^{m-1}+\cdots+b_m(t).\]
Let $P(w_0, t)=P_0(w_0)+tP_1(w_0)+t^2P_2(w_0)+\cdots$
where $P_0(w_0)$ is a monic polynomial of degree $m$ and ${\rm deg}_{w_0}P_j(w_0)\le m-1$ for $j\ge 1$. Looking at the two partial derivatives for $t=0$, the proper transform $C_0$ is smooth if and only if
\[P_0(w_0)=0, \,\frac{\pa}{\pa w_0}P_0(w_0)=0, \, P_1(w_0)=0\] have no common roots. 

If not, let $D_1(w_0)={\rm gcd}$ of the above three polynomials, so that $P_0(w_0)=D_1^r\cdot G$ for some $2\le r\le m$ such that $ r \cdot deg(D_1) \leq m$. 

To continue, observe that $D_1$ is again monic, so there is no tangent $t=0$, and   it suffices to blow-up $t=D_1=0$ setting  $D_1(w_0)=w_1t$.

We get equations
\[ \begin{array}{l} 
D_1(w_0)^r\cdot G(w_0) +tD_1(w_0)\tilde P(w_0)+t^2\cdots \\
= t^r [ w_1^r\cdot G(w_0)+w_1^{r-1} \cdots ].
\end{array}\]
 where the divisibility of the second term by $w_1^{r-1}$, and similarly for the next terms follows
  by applying once more Lemma~\ref{divide}.

 We continue this process until all branches are separated.
 
 \end{proof}

\noindent{\bf Proof of Theorem \ref{MT}}

Recall the Taylor series development in $t$ of the equation in $\sZ$ of the image $\Sigma : = \Phi (\sW)$: 
\[ \Si (z,t) : = \s(z)^m+t\s_1(z)+t^2\s_2(z)+\cdots +t^{m-1}\s_{m-1}(z)+\ldots.\]

Choose a general point $p$ in $X$.  In order to show that the process terminates
we shall at a later moment consider a germ of plane curve $C$ passing  through $p$, consisting of $m$ smooth branches with non vertical tangents, and obtained as  a linear section of $\Si$ (i.e., $C$ is obtained by setting 
$(z_1, \dots, z_n)$ = constant in appropriate local coordinates). 

Observe that, by Lemma~\ref{divide}, $\s^i | \s_{m-i}$ for $i=1, \ldots, m-1$. So we can rewrite the equation of $\Si$ as follows:
\begin{equation} \label{Sigma}
\Si (z,t) : = \s(z)^m+a_1(z, t)\s(z)^{m-1}+a_2(z, t)\s(z)^{m-2}+\cdots +a_m(z, t)=0
\end{equation}
where $t^i | a_i(z, t)$.

Please observe that the above equation is not just a local equation, but that it is a global equation for a section
of a line bundle on $\sZ$ (see remark \ref{extends}) $$\hol_{\sZ}(\Sigma) = \sL_0^{\otimes m}.$$

Now, by setting $\s=tw_0$ and $a_i(z, t)=t^ib_i(z, t)$ in  equation~\ref{Sigma}, we obtain the equation 
\[ P_{w_0}(w_0, z, t):=w_0^m+b_1(z, t)w_0^{m-1}+\cdots+ b_m(z, t)=0.\]
This is a hypersurface and its singular locus is contained in $t=0$ by our assumption. Write
\[ b_j(z, t)=b_{j, 0}(z)+tb_{j, 1}(z)+\cdots,\]
so that the $b_{j, 0}(z)$'s are sections on $Z_0$ of a line bundle of the form $\hol_{Z_0} (i X)$
(this observation shall be repeated in the sequel, leading to the proof that we get a normally induced
covering of $X$).

Hence we can  write
\[ P(w_0, z):=w_0^m+b_{1, 0}(z)w_0^{m-1}+\cdots+b_{m, 0}(z).\]
Hence  the equation $P_{w_0}$ has the development  
\[ P_{w_0}(w_0, z, t) = P(w_0, z)+t\sum_{j=1}^mb_{j, 1}w_0^{m-j}+t^2\cdots.\]
Set $\hat P(w_0, z): =\sum_{j=1}^mb_{j, 1}w_0^{m-j}$;  then the gradient  of $P_{w_0}$ for $t=0$ equals 
\[ ( \frac{\pa P}{\pa w_0},\,\hat P, \, \frac{\pa P}{\pa z_1}, \, \frac{\pa P}{\pa z_2},\ldots ).\]

The singular locus is contained in $t=0$ and is there given by
\[ \begin{cases}
P(w_0, z)=0\\
\frac{\pa}{\pa w_0} P(w_0, z)=0 \\
\hat P(w_0, z)=0\\
\frac{\pa}{\pa z_i}P(w_0, z)=0 \,\,({\rm for}\, i=1, \ldots, n).
\end{cases} \]

The hypersurface defined by $P_{w_0}(w_0, z, t)=0$ is normal unless it is singular in codimension 1.
 Assume that  the hypersurface defined by $P_{w_0}(w_0, z, t)=0$ is not normal, so that  it is singular in codimension 1. 
Observe that the square free part of $P(w_0, z)$ is irreducible because its zero set   is the image of an irreducible variety $W_0$,
hence for $t=0$ the vanishing of  $P(w_0, z)$ should imply the vanishing of the other polynomials.

Since  $P(w_0, z) = 0  \Rightarrow \pa P(w_0, z)/\pa w_0=0$,  hence  $P(w_0, z)=Q(w_0, z)^r,$
for some $ 2\le r\le m$. 

And $Q(w_0, z)$ is again irreducible and $Q(w_0, z) | \hat P(w_0, z)$. Now, since $r\ge 2$,
\[ \pa P(w_0, z)/\pa z_i=rQ(w_0, z)^{r-1}\pa Q(w_0, z)/\pa z_i,\]
so the last conditions are automatically fulfilled.

If we write the equation $P_{w_0}(w_0, z, t)$ in terms of $Q(w_0, z)$, then
\[Q(w_0, z)^r+tQ(w_0, z)\tilde P(w_0, z, t)+t^2\cdots.\]
Then again by Lemma~\ref{divide}, $Q(w_0, z)^{r-2} |\tilde P(w, z, t)$, and so on.

Observe that $Q(w_0, z)=0$ gives a covering $W'_0$ of $X$ of degree $m/r$.

In view of Lemma~\ref{curve}, we set $Q(w_0, z)=w_1t$. Then we get an iterated covering as in Lemma~\ref{curve}, where $P=D_1^r=Q^r$. We consider   now  a germ of plane curve $C$ through the general point $p\in X$ consisting of $m$ smooth branches with non vertical tangents by taking linear  sections.
Since the resolution of $C$ is obtained by a finite sequence  of blow-ups, we get an iterated covering as in Lemma~\ref{curve}, which is normal and a complete intersection. 

 Lemma~\ref{smooth} implies that we have then obtained $\sW$. 
 
 Since $W_{0}$ is smooth, let us set $t=0$ in the above equations \ref{steq2}, and observe that the matrix of derivatives has
 triangular form. So, smoothness of $W_{0}$  implies the smoothness of $X$
 and of all the intermediate coverings.
 
 Therefore we have shown (A1), (A2) and (A3).

 Let us show now the converse,  (B1) and (B2).
 
Assume we  are given an  iterated smooth univariate covering $W_0$  of $X$ which is normally induced, defined by equations:

\begin{equation}\label{steq3}
\begin{cases}
\s(z)= 0 \\
Q_1(w_0, z)= 0 \\
\cdots\, \,\,\,\, \cdots \\
Q_{k}(w_0, \ldots, w_{k-1}, z)= 0\\
Q_{k+1}(w_0, \ldots, w_k, z)=0.
\end{cases}
\end{equation}

Claim I): if $H^0 (Z_{0}, \hol_{Z_{0}} (iX))$ surjects onto  
$H^0 (X, \hol_{X} (iX))$ for each $i \geq 2$, then every iterated  univariate covering $W_0$  of $X$
is normally induced, i.e., it extends to an iterated  univariate covering of $Z_0$.

\Proof
First of all we can put the equations of the iterated covering $W_0$  of $X$ in Tschirnhausen form.
 Here the polynomial equation of a univariate covering is said to be in Tschirnhausen form if $a_1(x) \equiv 0$,
and  every covering can be put in 
Tschirnhausen form after an automorphism replacing $w$ with $ w - \frac{1}{m}  a_1 (x)$.

Second,  the coefficients $a_{j,I} $ of the polynomials $Q_j$ are now given by sections of 
line bundles of the form $ \hol_{X} ( n(I,j) X )$, where $n(I,j) \geq 2$. By assumption, they extends to sections
of $ \hol_{Z_0} ( n(I,j) X )$.

That this holds if $H^1 (Z_{0}, \hol_{Z_{0} }(iX)) = 0, \forall i \geq 1,$
follows immediately from the long exact cohomology sequence associated to the exact sequence
$$ 0 \ra  \hol_{Z_{0} }(i X) \ra \hol_{Z_{0} }((i+1)X) \ra \hol_{X }((i+1)X) \ra 0  ,$$
for $i \geq 1$.

\qed

Let's pass to the proof of (B1).

 By our assumption, $\s (z)$ extends to a section $\s(z,t)$ of the line bundle
$\sL_0 $ on the whole family $\sZ$. Similarly  the sections $a_{j,I} $ extend to sections $a_{j,I} (z,t)$ over $\sZ$,
hence we can also extend the polynomials $Q_{j}(w_0, \ldots, w_{j-1}, z)$ to polynomials 
$Q_{j}(w_0, \ldots, w_{j-1}, z,t)$.

Then we define the iterated univariate covering $\sW$ of $\sZ$ via the following equations:

\begin{equation}\label{steq4}
\begin{cases}
\s(z,t)=w_0t\\
Q_1(w_0, z, t)=w_1t \\
\cdots\, \,\,\,\, \cdots \\
Q_{k}(w_0, \ldots, w_{k-1},z,t )=w_{k}t\\
Q_{k+1}(w_0, \ldots, w_k, z, t)=0.
\end{cases}
\end{equation}

To finish the proof of (B2), observe that  if the family $\sZ$  is trivial, $\sZ = Z_{0} \times T$,  then 
obviously 
$$H^0 (Z_{0},\hol_{Z_{0} }(iX)) \subset H^0 (\sZ,\hol_{\sZ}(iX)),$$
hence there is no problem to extend the iterated univariate covering to one of $\sZ$.

\qed

\section{Applications}

The first applications that we shall give are, more or less, direct corollaries of the previous general results.

\subsection{Hypersurfaces in projective space}
\begin{corollary}
The   smooth manifolds $W_0$ with ample canonical divisor which admit a 1-parameter deformation 
to a hypersurface embedding
$$p: \sW \ra T, \ \ \Phi : \sW \ra \PP^{n+1} \times T$$ (here, for $t \neq 0$, $W_t$ is a smooth hypersurface in $\PP^{n+1}$)
are exactly the  iterated weighted deformations of  example \ref{weighted} with $m d > n+2$, and the family $\sW$ is a pull back from the
family  in \ref{steq}.

The class of such manifolds $W_0$ is open in the Kuranishi space for $ n \geq 2$.
\end{corollary}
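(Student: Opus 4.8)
The plan is to obtain the corollary by specializing Theorem \ref{MT} to the constant family $\sZ=\PP^{n+1}\times T$, and then to treat the openness statement by a separate deformation-theoretic argument.

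First I would apply part (A) of Theorem \ref{MT}. Since $K_{W_0}$ is ample and $W_0$ admits a $1$-parameter deformation to a hypersurface embedding in $\PP^{n+1}$, the image $X=\{\s=0\}$ is a smooth hypersurface, of some degree $d$, and $\Phi_0\colon W_0\ra X$ is a normally induced iterated smooth univariate covering of normal type. As $\sZ=\PP^{n+1}\times T$ is trivial, the line bundles are $\sL_j|_{Z_0}=\hol_{\PP^{n+1}}(m_jd)$, and by the normally induced condition all coefficients of the intermediate polynomials $Q_j$ come from sections in $H^0(\PP^{n+1},\hol(rd))$. By the identification used in Example \ref{weighted}, this exhibits $W_0$ as a complete intersection of multidegree $(d,dm_1,\dots,dm_k,dm)$ in $\PP(1,\dots,1,d,dm_1,\dots,dm_k)$, i.e. as an iterated weighted deformation. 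For the converse inclusion I would use that on $\PP^{n+1}$ one has $H^1(\PP^{n+1},\hol(id))=0$ for all $i\geq 1$ (since $n+1\geq 2$); by Remark \ref{precisions}(b2) this gives the surjectivity of $H^0(\PP^{n+1},\hol(iX))\ra H^0(X,\hol(iX))$, and part (B2) of Theorem \ref{MT} then shows that every such covering does deform to a hypersurface embedding. This identifies the class with the iterated weighted deformations; that $\sW$ is a pullback of the family \ref{steq} is exactly the substitution $t_j=t^{n_j}$ discussed after the theorem.

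Next I would fix the numerical hypothesis $md>n+2$ by computing $K_{W_0}$ through adjunction in the weighted projective space:
$$K_{W_0}=\hol_{W_0}\big((d+dm_1+\cdots+dm_k+dm)-((n+2)+d+dm_1+\cdots+dm_k)\big)=\hol_{W_0}(md-(n+2)).$$
Since $\hol_{W_0}(1)$ is ample, $K_{W_0}$ is ample precisely when $md>n+2$. For $n=2$ this forces $md>4$, so the general fibre $W_t$ is a smooth hypersurface of general type and the K3 exception in Sernesi's theorem \cite{smalldef} does not occur.

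Finally, for the openness statement with $n\geq 2$, the plan is to show that every sufficiently small deformation of $W_0$ is again an iterated weighted deformation of the same multidegree, whence it lies in the class by the converse proved above. Concretely, the iterated weighted deformations form an explicit family $\mathcal F\ra B$ over the parameter space $B$ of the data $(\s,Q_1,\dots,Q_{k+1};t_0,\dots,t_k)$, and it suffices to prove that the Kodaira--Spencer map $T_{[W_0]}B\ra H^1(W_0,T_{W_0})$ is surjective; openness of versality then covers a neighbourhood of $[W_0]$ in the Kuranishi space by fibres of $\mathcal F$, all of which belong to the class. I expect this surjectivity to be the main obstacle: one must show that $H^1(W_0,T_{W_0})$ is spanned by the first-order deformations coming from varying $\s$, the covering polynomials $Q_j$ and the parameters $t_j$. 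I would attack it by exploiting the triangular structure of the iterated covering to filter $T_{W_0}$ and reduce, via the vanishings $H^1(\PP^{n+1},\hol(id))=0$, to Sernesi's completeness theorem for the smooth hypersurface locus $\{t_j\neq 0\}$; here the hypotheses $n\geq 2$ and $md>n+2$ are exactly what make that theorem applicable, excluding both the curve case $n=1$ and the K3 case. Some care is needed because $W_0$ may meet the singular locus of the weighted projective space, so the cohomological computations should be carried out on the total space $\sL_0\oplus\cdots\oplus\sL_k$ over $\PP^{n+1}$, as in the proof of the main theorem, rather than directly on the singular ambient.
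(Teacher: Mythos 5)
Your proposal is correct and follows essentially the same route as the paper: necessity via part (A) of Theorem \ref{MT} together with the identification of a normally induced iterated covering over $\PP^{n+1}$ with a weighted complete intersection as in Example \ref{weighted}, sufficiency via the explicit family \ref{steq} (your appeal to (B2) plus the vanishing $H^1(\PP^{n+1},\hol(id))=0$ is just the paper's ``set $t_j=t$'' argument in disguise), and openness by adapting Sernesi's completeness argument for complete intersections to the weighted setting, which is exactly what the paper invokes. Your adjunction computation giving $K_{W_0}=\hol_{W_0}(md-(n+2))$ is a useful detail the paper leaves implicit, and your caution about the singular locus of the weighted projective space is actually unnecessary, since the monic equations force $W_0$ to lie in the smooth affine part (the total space of $\sL_0\oplus\cdots\oplus\sL_k$).
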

\Proof
Either $W_0$ is a hypersurface, and there is nothing to prove (set $m=1$),
or the degree of $ \phi_0 : W_0 \ra X$ is $ m \geq 2$.

Then theorem \ref{MT} applies, and it is easy to see that we get a manifold in the family \ref{steq}.
The converse is direct (set $t = t_j \ \forall j$  in the family \ref{steq}).

The proof of the second  statement follows imitating quite closely Sernesi's argument in \cite{smalldef}
for the case of complete intersections in weighted projective spaces.
\footnote{ In the more general situation of theorem \ref{MT} one can use Horikawa's theory of
deformations of finite holomorphic maps, see \cite{horikawaNB}, also the lecture notes \cite{montecatini}. }

\qed

\subsection{Hypersurfaces in an Abelian variety}

\begin{corollary}
The  smooth manifolds with ample canonical divisor $W_0$  which admit a 1-parameter deformation 
$p: \sW \ra T$ where, for $t \neq 0$, $W_t$ is a smooth hypersurface in an Abelian variety $A_t$,
are, for $n \geq 2$,  exactly the iterated smooth univariate coverings $W_0 \ra X$ of normal type,
where $X$  is an ample divisor in an Abelian variety $A_0$.
\end{corollary}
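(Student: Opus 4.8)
The plan is to run both implications through Theorem~\ref{MT}, taking the ambient family $\sZ$ to be the relative Albanese variety in the necessity direction and the constant family in the sufficiency direction, exactly parallel to the $\PP^{n+1}$ corollary above.

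\emph{Necessity.} Suppose $p : \sW \ra T$ is a deformation with $W_t \subset A_t$ a smooth hypersurface for $t \neq 0$. Since $\sW \ra T$ is smooth and proper the irregularity $q(W_t)$ is constant, and for $t \neq 0$ it equals $\dim A_t = n+1$ by the Lefschetz hyperplane theorem (here $n \geq 2$ is used). I would therefore form the relative Albanese $\sZ := \Alb(\sW/T) \ra T$, a family of $(n+1)$-dimensional Abelian varieties with $\sZ_t = \Alb(W_t)$, together with the relative Albanese morphism $\Phi : \sW \ra \sZ$ satisfying $\pi \circ \Phi = p$. For $t \neq 0$, Lefschetz gives $\Alb(W_t) = A_t$ and $\Phi_t$ is (a translate of) the embedding $W_t \into A_t$; for $t = 0$, $\Phi_0 = \phi_0 : W_0 \ra A_0 := \Alb(W_0)$ is generically finite onto its image $X$ (lemma 149 of \cite{C-top}), a reduced hypersurface. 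Thus the diagram is a 1-parameter deformation to hypersurface embedding, and Theorem~\ref{MT}(A) yields that $X$ is smooth and that $\phi_0$ is a normally induced iterated smooth univariate covering of normal type. Granting that $X$ is ample in $A_0$ (discussed below), this is the asserted structure.

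\emph{Sufficiency.} Conversely let $\phi_0 : W_0 \ra X \subset A_0$ be a smooth iterated univariate covering of normal type with $X$ an ample divisor. I would take the constant family $\sZ := A_0 \times T$ and apply Theorem~\ref{MT}(B2), whose only remaining hypothesis is that the covering be normally induced. By Claim~I in the proof of Theorem~\ref{MT} (see also Remark~\ref{precisions}(b2)) this follows once $H^1(A_0, \hol_{A_0}(iX)) = 0$ for all $i \geq 1$; but $\hol_{A_0}(iX)$ is ample and $K_{A_0}$ is trivial, so Kodaira vanishing gives $H^q(A_0, \hol_{A_0}(iX)) = 0$ for every $q > 0$. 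Theorem~\ref{MT}(B1)--(B2) then produces a family $\sW \ra T$, a complete intersection in the pull-back of $\sL_0 \oplus \cdots \oplus \sL_k$, whose general fibre $W_t$ is a smooth hypersurface of class $mX$ in the Abelian variety $A_0$; this is the required deformation.

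\emph{Main obstacle.} The delicate point is the ampleness of the limit divisor $X$. As a flat limit of the ample classes $\hol_{A_t}(W_t)$, the class $\hol_{A_0}(X) = \sL_0|_{A_0}$ is of type $(1,1)$ and nef, i.e. of index $0$; it is ample precisely when it is non-degenerate. Were it degenerate, the connected kernel $B$ of the associated homomorphism $A_0 \ra \hat A_0$ would be a positive-dimensional Abelian subvariety stabilizing $X$; since the normal-type covering $W_0 \ra X$ is built from tensor powers of $\hol_{A_0}(X)$, it is $B$-equivariant, producing a free $B$-action on $W_0$ whose orbits are Abelian varieties on which $K_{W_0}$ restricts trivially. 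This contradicts the ampleness of $K_{W_0}$, so $X$ is ample. Making this degeneration-of-the-Albanese analysis rigorous across $t=0$, and confirming the equivariance that forbids the degenerate case, is where the real work lies.
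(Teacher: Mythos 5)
Your proposal follows the paper's own proof in both directions: the paper likewise passes to the relative Albanese family (identifying $A_t=\Alb(W_t)$ for $t\neq 0$ via the vanishing $H^i(\hol_{A_t}(-W_t))=0$, $i=1,2$, rather than via your Lefschetz argument --- an equivalent step), invokes lemma 149 of \cite{C-top} to obtain a 1-parameter deformation to hypersurface embedding, and then applies Theorem~\ref{MT}: part (A) for necessity, and part (B2) with the constant family $A_0\times T$ for sufficiency, using exactly your vanishing $H^1(\hol_{A_0}(iX))=0$ for $i\geq 1$.

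The only real divergence is your ``main obstacle'', and there two remarks are in order. First, you are right that ampleness of the limit $X$ needs an argument: the paper never gives one explicitly, although its assertion $H^1(\hol_{A_0}(iX))=0$ presupposes it. Second, the gap closes far more easily than your sketch suggests; no equivariance and no free $B$-action on $W_0$ are needed, and the argument lives entirely on the central fibre. Since $X$ is effective, $\hol_{A_0}(X)$ is nef (a generic translate of $X$ meets a given curve properly), so, as you say, failure of ampleness means degeneracy; then the connected stabilizer $B$ of $X$ is a positive-dimensional Abelian subvariety, $\hol_{A_0}(X)$ is pulled back from $A_0/B$, hence it is trivial on every translate of $B$ contained in $X$. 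But the normal-type structure plus adjunction ($K_{A_0}$ trivial, so $K_X=\hol_X(X)$) give $K_{W_0}=\phi_0^*\hol_X(rX)$ with $r\geq 1$ (in fact $r=m$), so by the projection formula $K_{W_0}$ has degree $0$ on any curve of $W_0$ lying over such a translate --- contradicting ampleness of $K_{W_0}$. (Alternatively, $m^{n+1}X^{n+1}=(W_t)^{n+1}>0$ by constancy of intersection numbers for $\hol_{\sA}(\Sigma)$, and nef with positive top self-intersection implies ample on an Abelian variety.) One last point affecting you and the paper equally: the Lefschetz, respectively vanishing, step identifying $\Alb(W_t)$ with $A_t$ requires $W_t$ to be \emph{ample} in $A_t$, which is not automatic for a smooth hypersurface in an Abelian variety (e.g. $E\times C\subset E\times A'$); it follows from openness of ampleness, since $K_{W_t}=\hol_{A_t}(W_t)|_{W_t}$ is then ample for small $t\neq 0$, and the same stabilizer argument shows such a $W_t$ is non-degenerate, hence ample.
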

\Proof
$A_t$ is, for $t \neq 0$, the Albanese variety of $W_t$, since  $H^i(\hol_{A_t}(-W_t)) = 0$ for $i=1, 2$,
hence the exact cohomology sequence associated to the exact sequence 
$$ 0 \ra  \hol_{A_{t} }(-W_t) \ra \hol_{A_{t}} \ra \hol_{W_t } \ra 0  $$
yields $h^1(\hol_{A_t})= h^1(\hol_{W_t}) .$

We obtain therefore a morphism 
$$ \Phi : \sW \ra \sA, \ A_t = Alb (W_t)$$
 that induces a 1-parameter deformation to hypersurface embedding in Abelian varieties by lemma 149 of \cite{C-top}.

Our main theorem applies, in particular (B2) holds since we have 

$H^1(\hol_{A_0}( i X))= 0$
for all $ i \geq 1$.

\qed

\subsection{Multiple Inoue-type varieties}

We can now extend the definition of Inoue type variety (see \cite{BC-IT} and \cite{C-top})
in the following way:

\begin{definition}
A primary multiple Inoue-type variety $W_0$ of smoothing type is
a normally induced   smooth iterated  univariate covering $W_0 \ra X$, where
$X$ is a smooth ample subvariety of a projective classifying space $Z_0$.

A multiple Inoue-type variety $Y_0$ of smoothing type is a  quotient $W_0 /G$,
for the free action of a finite group, of a primary 
multiple Inoue-type variety $W_0$ of smoothing type.
\end{definition}

We do not give further applications here, hopefully in a future paper.


\end{document}